\newcommand{\keywords}[1]{\par\addvspace\baselineskip\noindent\keywordname\enspace\ignorespaces#1}
\DeclareMathOperator{\spann}{span}
\DeclareMathOperator{\im}{im} 
\DeclareMathOperator{\sign}{sign}
\DeclareMathOperator{\e}{e} 
\DeclareMathOperator{\diag}{diag}
\newcommand{\RR}{{\mathbb R}}
\newcommand{\QQ}{{\mathbb Q}}
\newcommand{\mal}{\circ} 
\newcommand{\dd}[2]{\frac{\text{d} #1}{\text{d} #2}}
\newcommand{\mat}{W}
\newcommand{\vct}{w}
\newcommand{\Ys}{Y} 
\newcommand{\Yk}{\tilde{Y}} 
\newcommand{\ys}{y} 
\newcommand{\yk}{\tilde{y}} 
\newcommand{\Vs}{V_s} 
\newcommand{\Mi}{I_E} 
\newcommand{\Ms}{I_s} 
\newcommand{\Mr}{\Delta_k} 
\newcommand{\ML}{A_k} 
\newcommand{\kk}{\kappa_k} 
\newcommand{\ka}{\kappa} 
\begin{document}

\mainmatter 

\title{Generalized Mass-Action Systems \\
and Positive Solutions of Polynomial Equations \\
with Real and Symbolic Exponents}

\titlerunning{Polynomial Equations with Real and Symbolic Exponents}

%
%
\author{Stefan M{\"u}ller \and Georg Regensburger}
\authorrunning{S.~M{\"u}ller \and G.~Regensburger}

\institute{Johann Radon Institute for Computational and Applied Mathematics (RICAM),\\
Austrian Academy of Sciences, Linz, Austria\\
\email{\{stefan.mueller,georg.regensburger\}@ricam.oeaw.ac.at}}

\maketitle

\begin{abstract}
Dynamical systems arising from chemical reaction networks with mass action kinetics
are the subject of chemical reaction network theory (CRNT). In particular, this theory provides statements
about uniqueness, existence, and stability of positive steady states
for all rate constants and initial conditions.
In terms of the corresponding polynomial equations,
the results guarantee uniqueness and existence of positive solutions for all positive parameters.

We address a recent extension of CRNT, called generalized mass-action systems,
where reaction rates are allowed to be power-laws in the concentrations.
In particular, the (real) kinetic orders can differ from the (integer) stoichiometric coefficients.
As with mass-action kinetics, complex balancing equilibria are determined by the graph Laplacian of the underlying network
and can be characterized by binomial equations and parametrized by monomials.
In algebraic terms, we focus on a constructive characterization of positive solutions
of polynomial equations with real and symbolic exponents.

Uniqueness and existence for all rate constants and initial conditions
additionally depend on sign vectors of the stoichiometric and kinetic-order subspaces.
This leads to a generalization of Birch's theorem,
which is robust with respect to certain perturbations in the exponents.
In this context, we discuss the occurrence of multiple complex balancing equilibria.

We illustrate our results by a running example
and provide a MAPLE worksheet with implementations of all algorithmic methods.

\keywords{Chemical reaction network theory, generalized mass-action systems, generalized polynomial equations, symbolic exponents, positive solutions, binomial equations, 
Birch's theorem, oriented matroids, multistationarity}
\end{abstract}

\section{Introduction}
In this work, we focus on dynamical systems arising from (bio-)chemical reaction networks with \emph{generalized} mass-action kinetics
and positive solutions of the corresponding systems of generalized polynomial equations. 

In chemical reaction network theory, as initiated by Horn, Jackson, and Feinberg in the 1970s~\cite{Feinberg1972,HornJackson1972,Horn1972},
several fundamental results are based on the assumption of mass action kinetics (MAK). 
Consider the reaction
\begin{equation} \label{ABC}
1 \, \ce{A} + 1 \, \ce{B} \to \ce{C}
\end{equation}
involving the reactant species A, B and the product C,
where we explicitly state the stoichiometric coefficients of the reactants.
The left- and right-hand sides of a reaction, in this case A+B and C, are called (stoichiometric) complexes.
Let 
\[
[A]=[A](t)
\]
denote the concentration of species $\ce{A}$ at time $t$, and analogously for $\ce{B}$ and $\ce{C}$.
Assuming MAK, the rate at which the reaction occurs is given by
\[
v = k \, [\ce{A}]^1 [\ce{B}]^1
\]
with rate constant $k>0$. In other words,
the reaction rate is a monomial in the reactant concentrations $[\ce{A}]$ and $[\ce{B}]$
with the stoichiometric coefficients as exponents.
Within a network involving additional species and reactions,
the above reaction contributes to the dynamics of the species concentrations as
\[
\dd{}{t}
\begin{pmatrix}
[\ce{A}] \\ [\ce{B}] \\ [\ce{C}] \\ [\ce{D}] \\ \vdots
\end{pmatrix}
=
k \, [\ce{A}] [\ce{B}] \!
\begin{pmatrix}
-1 \\ -1 \\ 1\\ 0 \\ \vdots
\end{pmatrix}
+ \cdots
\]

In many applications, the reaction network is given,
but the values of the rate constants are unknown.
Surprisingly, there are results on existence, uniqueness, and stability of steady states
that do not depend on the rate constants.
See, for example, the lecture notes \cite{Feinberg1979} and the surveys \cite{Feinberg1987,Feinberg1995a,Gunawardena}. 

However, the validity of MAK is limited;
it only holds for elementary reactions in homogeneous and dilute solutions.
For biochemical reaction networks in intracellular environments, the rate law has to be modified.
In previous work  \cite{MuellerRegensburger2012}, we allowed generalized mass-action kinetics (GMAK)
where reaction rates are power-laws in the concentrations.
In particular, the exponents need not coincide with the stoichiometric coefficients
and need not be integers.
For example, the rate at which reaction \eqref{ABC} occurs may be given by
\[
v = k \, [\ce{A}]^a [\ce{B}]^b
\]
with kinetic orders $a,b > 0$.
Formally, we specify the rate of a reaction by associating (here indicated by dots) with the reactant complex a kinetic complex,
which determines the exponents in the generalized monomial:
\[
\begin{array}{ccc} 
\ce{A} + \ce{B} & \to & \ce{C}  \\
\vdots \\
a \ce{A} + b \ce{B}
\end{array}
\]
Before we give the definition of generalized mass action systems, we introduce a running example, 
which will be used to motivate and illustrate general statements.
Throughout the paper, we focus on algorithmic aspects of the theoretical results.
Additionally, we provide a MAPLE worksheet\footnote{The worksheet is available at \url{http://gregensburger.com/software/GMAK.zip}.} with implementations of all algorithms applied to the running example. 
For other applications of computer algebra to chemical reaction networks,
we refer to \cite{BoulierLemairePetitotSedoglavic2011,ErramiSeilerEiswirthWeber2012,Lemaire2012,SamalErramiWeber2012}.

\bigskip

\noindent {\bf Notation.} We denote the strictly positive real numbers by $\RR_>$.
We define $\e^x \in \RR^n_>$ for $x \in \RR^n$ component-wise, that is, $(\e^x)_i = \e^{x_i}$;
analogously, $\ln(x) \in \RR^n$ for $x \in \RR^n_>$ and $x^{-1} \in \RR^n$ for $x \in \RR^n$ with $x_i \neq 0$.
For $x,y \in \RR^n$,
we denote the component-wise (or Hadamard) product by $x \mal y \in \RR^n$, that is, $(x \mal y)_i = x_i y_i$;
for $x \in \RR^n_>$ and $y \in \RR^n$, we define $x^y \in \RR_>$ as $\prod_{i=1}^n x_i^{y_i}$.

Given a matrix $B \in \RR^{n \times m}$, we denote by $b^1, \dots, b^m$ its column vectors and by $b_1, \dots, b_n$ its row vectors.
For $x \in \RR^n_>$, we define $x^B \in \RR^m_>$ as
\[
(x^B)_j  = x^{b^j} = \prod_{i=1}^n x_i^{b_{ij}}
\]
for $j=1,\ldots,m$. As a consequence,
\[
\ln (x^B) = B^T \ln x .
\]
Finally, we identify a matrix $B \in \RR^{n \times m}$ with the corresponding linear map $B \colon \RR^m \to \RR^n$
and write $\im(B)$ and $\ker(B)$ for the respective vector subspaces. 

\section{Running Example}

We consider a reaction network based on the weighted directed graph
\begin{equation} \label{graphex}
\xymatrix{
1 \ar@<0.5ex>[r]^{k_{12}} & 2 \ar@<0.5ex>[l]^{k_{21}} \ar[d]^{k_{23}} & 4 \ar@<0.5ex>[r]^{k_{45}} & 5 \ar@<0.5ex>[l]^{k_{54}} \\
& 3 \ar[lu]^{k_{31}} 
}
\end{equation}
with 5 vertices, 6 edges and corresponding positive weights.
Clearly, the edges represent reactions and the weights are rate constants.
We assume that the network contains 4 species A, B, C, D
and associate with each vertex a (stoichiometric) complex,
that is, a formal sum of species:
\begin{equation*}
\xymatrix{
\ce{A}+\ce{B} \ar@<0.5ex>[r] & \ce{C} \ar@<0.5ex>[l] \ar[d] & \ce{A} \ar@<0.5ex>[r] & \ce{D} \ar@<0.5ex>[l] \\
& 2 \ce{A} \ar[lu] 
}
\end{equation*}
In order to specify the reaction rates, e.g., $v_{12}=k_{12} [\ce{A}]^\frac{1}{2} [\ce{B}]^\frac{3}{2}$,
we additionally associate a kinetic complex with each source vertex:
\begin{equation*}
\xymatrix{
\frac{1}{2} \ce{A} + \frac{3}{2} \ce{B} \ar@<0.5ex>[r] & \ce{C} \ar@<0.5ex>[l] \ar[d] & \ce{A} \ar@<0.5ex>[r] & \ce{D} \ar@<0.5ex>[l] \\
& 3 \ce{A} \ar[lu] 
}
\end{equation*}
Writing 
\[
x=(x_1,x_2,x_3,x_4)^T
\]
for the concentrations of species A, B, C, D, the dynamics of the generalized mass action system is given by
\begin{equation}
\label{eq:ODE}
\dd{}{t}
\begin{pmatrix}
x_1 \\ x_2 \\ x_3 \\ x_4
\end{pmatrix}
=
\begin{pmatrix}
-1 & 1  &  2 & -1 & -1 &  1\\
-1 & 1  &  0 &  1 &  0 &  0\\
 1 & -1 & -1 &  0 &  0 &  0\\
 0 &  0 &  0 &  0 &  1 &  -1
\end{pmatrix}
\begin{pmatrix}
k_{12} \, (x_1)^\frac{1}{2} (x_2)^\frac{3}{2} \\ k_{21} \, x_3 \\ k_{23} \, x_3 \\ k_{31} \, (x_1)^3 \\ k_{45} \, x_1 \\ k_{54} \, x_4
\end{pmatrix}
= N \, v(x) ,
\end{equation}
where we fix an order on the edges, $E = \big( (1,2),(2,1),(2,3),(3,1),(4,5),(5,4) \big)$,
and introduce the stoichiometric matrix $N$ and the vector of reaction rates $v(x)$.

We further decompose the system.
Writing the stoichiometric and kinetic complexes as column vectors of the matrices
\[
\Ys =
\begin{pmatrix}
1 & 0 & 2 & 1 & 0\\
1 & 0 & 0 & 0 & 0\\
0 & 1 & 0 & 0 & 0\\ 
0 & 0 & 0 & 0 & 1
\end{pmatrix}
\quad\text{and}\quad 
\Yk =
\begin{pmatrix}
\frac{1}{2} & 0 & 3 & 1 & 0\\
\frac{3}{2} & 0 & 0 & 0 & 0\\ 
0           & 1 & 0 & 0 & 0\\ 
0           & 0 & 0 & 0 &1
\end{pmatrix} 
\] 
and using the incidence matrix of the graph $\eqref{graphex}$,
\[
\Mi= 
\begin{pmatrix}
-1 &  1 &  0 &  1 &  0 &  0\\
 1 & -1 & -1 &  0 &  0 &  0\\ 
 0 &  0 &  1 & -1 &  0 &  0\\ 
 0 &  0 &  0 &  0 & -1 &  1\\ 
 0 &  0 &  0 &  0 &  1 & -1
\end{pmatrix} ,
\] 
we can write the stoichiometric matrix as
\[
N = \Ys \, \Mi .
\]
The vector of reaction rates $v(x)$ can also be decomposed by introducing 
a diagonal matrix 
\[
\Mr=\diag(k_{12},k_{21},k_{23},k_{31},k_{45},k_{54})
\] 
containing the rate constants,
a matrix indicating the source vertex of each reaction, 
\[
\Ms=
\begin{pmatrix}
1 & 0 & 0 & 0 & 0 & 0\\ 
0 & 1 & 1 & 0 & 0 & 0\\ 
0 & 0 & 0 & 1 & 0 & 0\\
0 & 0 & 0 & 0 & 1 & 0\\
0 & 0 & 0 & 0 & 0 & 1
\end{pmatrix} ,
\]
and the vector of monomials determined by the kinetic complexes,
\[
x^{\Yk} =
\begin{pmatrix}
(x_1)^\frac{1}{2} (x_2)^\frac{3}{2} \\ x_3 \\ (x_1)^3 \\ x_1 \\ x_4
\end{pmatrix} .
\]
Then,
\[
v(x) = \Mr \, \Ms^T x^{\Yk} ,
\]
and we can write
\[
\dd{x}{t}
= N \, v(x) = \Ys \,\Mi \, \Mr \,\Ms^T \, x^{\Yk} . 
\]
Note that the matrix
\begin{equation}
\label{eq:Ak}
\ML=\Mi \, \Mr\, \Ms^T =
\begin{pmatrix}
-k_{12} &           k_{21} &  k_{31} &       0 &       0 \\ 
 k_{12} & -(k_{21}+k_{23}) &       0 &       0 &       0 \\ 
      0 &           k_{23} & -k_{31} &       0 &       0 \\ 
      0 &                0 &       0 & -k_{45} &  k_{54} \\ 
      0 &                0 &       0 &  k_{45} & -k_{54}
\end{pmatrix}
\end{equation}
depends only on the weighted digraph,
while $\Ys$ and $x^{\Yk}$ are determined by the stoichiometric and kinetic complexes.
The resulting decomposition
\begin{equation*}
\dd{x}{t} = \Ys \ML \, x^{\Yk} 
\end{equation*}
is due to~\cite{HornJackson1972}, where $\ML$ is called kinetic matrix and the stoichiometric and kinetic complexes are equal, that is, $\Ys=\Yk$. 
The interpretation of $\ML$ as a weighted graph Laplacian was introduced in~\cite{GatermannWolfrum2005}
and used in~\cite{CraciunDickensteinShiuSturmfels2009,ThomsonGunawardena2009,Gunawardena2012,MirzaevGunawardena2013,Johnston2013},
in particular, in connection with the matrix-tree theorem.

\section{Generalized Mass Action Systems}
\label{sec:generalized}

We consider \emph{directed graphs} $G=(V,E)$ given by a finite set of \emph{vertices} 
\[
V = \{ 1,\ldots,m \}
\] 
and a finite set of \emph{edges} $E\subseteq V \times V$.
We often denote an edge $e=(i,j) \in E$ by $i \to j$ to emphasize that it is directed from the \emph{source} $i$ to the \emph{target} $j$.
Further, we write
\[
\Vs = \{i \mid i\to j \in E \}
\]
for the set of source vertices that appear as a source of some edge.

\begin{definition}
A \emph{generalized chemical reaction network} $(G, \ys, \yk)$ is given by a digraph $G=(V,E)$ without self-loops,
and two functions
\[
\ys \colon V \rightarrow \RR^n \quad \text{and} \quad \yk \colon \Vs \rightarrow \RR^n
\]
assigning to each vertex a \emph{(stoichiometric) complex} and to each source a \emph{kinetic complex}.
\end{definition}
We note that this definition differs from \cite{MuellerRegensburger2012}. On the one hand, kinetic complexes were assigned also to non-source vertices, on the other hand, all (stoichiometric) complexes had to be different, and analogously the kinetic complexes.  

\begin{definition}
A \emph{generalized mass action system} $(G_k, \ys, \yk)$ is a generalized chemical reaction network $(G, \ys, \yk)$,
where edges $(i,j)\in E$ are labeled with {\em rate constants} $k_{ij} \in \RR_>$.
\end{definition}

The contribution of reaction $i \to j \in E$
to the dynamics of the species concentrations $x \in \RR^n$ is proportional to the \emph{reaction vector} $\ys(j) - \ys(i) \in \RR^n$.
Assuming generalized mass action kinetics,
the rate of the reaction is determined by the source kinetic complex $\yk(i)$ and the positive rate constant $k_{ij}$:
\[
v_{i \to j}(x) = k_{ij} \, x^{\yk(i)} .
\]

The ordinary differential equation associated with a generalized mass action system is defined as 
\begin{equation*}
\dd{x}{t}
= \sum_{i \to j  \in E} k_{ij} \, x^{\yk(i)} \big( \ys(j) - \ys(i) \big) .
\end{equation*}
The change over time lies in the {\em stoichiometric subspace}
\begin{equation*}
S = \spann \{ \ys(j) - \ys(i) \in \RR^n \mid i \to j \in E \} ,
\end{equation*}
which suggests the definition of a (positive) \emph{stoichiometric compatibility class} $(c'+S) \cap \RR^n_>$ with $c' \in \RR^n_>$.

In case every vertex is a source, that is, $\Vs = V$,
we introduce also the {\em kinetic-order subspace}
\begin{equation*}
\tilde{S} = \spann \{ \yk(j) - \yk(i) \in \RR^n \mid i \to j \in E \} .
\end{equation*}

In order to decompose the right-hand side of the ODE system,
we define the matrices $\Ys \in \RR^{n \times m}$ as $\ys^j = \ys(j)$
and $\Yk \in \RR^{n \times m}$ as $\yk^j = \yk(j)$ for $j \in \Vs$ and $\yk^j = 0$ otherwise (see also the remark below).
Further, we introduce the weighted \emph{graph Laplacian} $\ML \in \RR^{m \times m}$:
$(\ML)_{ij} = k_{ji}$ if $j \to i \in E$, $(\ML)_{ii} = - \sum_{i \to j \in E} k_{ij}$, and $(\ML)_{ij} = 0$ otherwise.
We obtain:
\begin{equation*}
\dd{x}{t}
=
\Ys \ML \, x^{\Yk} .
\end{equation*}
Note that $\yk^j$ can be chosen arbitrarily for $j \notin \Vs$, since in this case $(\ML)^j = 0$ and hence $(\ML)^j x^{\yk^j} = 0$.

Steady states of the ODE satisfying $x \in \RR^n_>$ and $\ML \, x^{\Yk} = 0$
are called {\em complex balancing equilibria}.
We denote the corresponding set by
\begin{equation*}
Z_k = \{ x \in \RR^n_> \mid \ML \, x^{\Yk} = 0 \} .
\end{equation*}

Finally, the {\em (stoichiometric) deficiency} is defined as
\[
\delta = m-l - s ,
\]
where $m$ is the number of vertices, $l$ is the number of connected components, 
and $s=\dim S$ is the dimension of the stoichiometric subspace.

Using $S = \im(Y \, \Mi)$, where $\Mi$ is the incidence matrix of the graph (for a fixed order on $E$),
we obtain the equivalent definition 
$$\delta = \dim(\ker(Y) \cap \im(\Mi)),$$
see for example \cite{Johnston2013}.
Further, note that $\im(\ML) \subseteq \im(\Mi)$.
Now, if $\delta=0$, then $\ker(Y) \cap \im(\ML) \subseteq \ker(Y) \cap \im(\Mi) = \{0\}$,
and there are no $x \in \RR^n_>$ such that $\Ys \ML \, x^{\Yk} = 0$, but $\ML \, x^{\Yk} \neq 0$.
In other words, if $\delta=0$, there are no steady states other than complex balancing equilibria.

\section{Graph Laplacian}
\label{sec:lap}

A basis for the kernel of $\ML$ in $\eqref{eq:Ak}$ is given by
\[
(k_{31}\,k_{21}+k_{31}\,k_{23}, k_{12}\,k_{31}, k_{23}\,k_{12}, 0, 0)^T
\quad \text{and} \quad
(0,0,0,k_{54},k_{45})^T .
\]
Obviously, the support of the vectors coincides with the connected components of the graph.
In general, this holds for the strongly connected components without outgoing edges.

Let $G_k=(V,E,k)$ be a weighted digraph without self-loops and $\ML$ its graph Laplacian.
Further, let $l$ be the number of connected components (aka linkage classes)
and $T_1, \ldots, T_t \subseteq V$ be the sets of vertices
within the strongly connected components without outgoing edges (aka terminal strong linkage classes).
Clearly, $t \ge l$.
A fundamental result of CRNT~\cite{FeinbergHorn1977} states
that there exist linearly independent $\chi^1,\ldots,\chi^t \in \RR^n_\ge$,
where $\chi^\lambda_\mu > 0$ if $\mu \in T_\lambda$ and $\chi^\lambda_\mu = 0$ otherwise,
such that $\ker(\ML) = \spann\{ \chi^1,\ldots,\chi^t \}$.

In fact, the non-zero entries in the basis vectors can be computed using the matrix-tree theorem:
\[
\chi^\lambda_\mu = K_\mu ,
\quad \lambda \in \{1, \ldots, t \}
\]
with \emph{tree constants}
\[
K_\mu = \sum_{\mathcal{T} \in \mathcal{S}_\mu} \prod_{i \to j \in \mathcal{T}} k_{ij} ,
\quad \mu \in \{1, \ldots, m \} ,
\]
where $\mathcal{S}_\mu$ is the set of directed spanning trees (for the respective strongly connected component without outgoing edges) rooted at vertex $\mu$;
see~\cite{Gunawardena2012,MirzaevGunawardena2013,Johnston2013}.
We refer to \cite{BrualdiRyser1991} for further details and references on the graph Laplacian
and a combinatorial proof of the matrix-tree theorem following \cite{Zeilberger1985}.

If there exists $\psi \in \RR^m_>$ with $\ML \, \psi = 0$,
then every vertex resides in a strongly connected component without outgoing edges,
that is, every connected component is strongly connected. 
In this case, the underlying unweighted digraph is called {\em weakly reversible}.
Now, let $(G,\ys,\yk)$ be a generalized chemical reaction network.
If there exist rate constants $k$ such that the generalized mass action system $(G_k,\ys,\yk)$ admits a complex balancing equilibrium~$x \in \RR^n_>$,
that is, $\ML \, x^{\Yk} = 0$,
then $G$ is weakly reversible.

\section{Binomial Equations for Complex Balancing Equilibria}

For a weakly reversible digraph,
we know from the previous section that a basis for $\ker(\ML)$, parametrized by the weights,
is given in terms of the $l$~connected components and the $m$~tree constants.

In our example, where $l=2$ and $m=5$,
basis vectors of $\ker(\ML)$ are given by
\[
(K_1,K_2,K_3, 0, 0)^T
\quad \text{and} \quad
(0,0,0,K_4,K_5)^T
\]
with tree constants
\[
(K_1,K_2,K_3,K_4,K_5) = (k_{31}\,k_{21}+k_{31}\, k_{23}, k_{12}\,k_{31}, k_{23} \, k_{12}, k_{54},k_{45}) .
\]

Due to their special structure, we immediately find ``binomial'' basis vectors for the orthogonal complement $\ker(\ML)^\perp$,
\[
(-K_2,K_1,0, 0, 0)^T ,
\quad
(0,-K_3,K_2, 0, 0)^T ,
\quad \text{and} \quad
(0,0,0,-K_5,K_4)^T ,
\]  
which are again determined by the connected components and tree constants.
These vectors form a basis since they are linearly independent and
\[
\dim \ker(\ML)^\perp = m - \dim \ker(\ML) = m-l = 5-2 = 3 .
\]
In our example, a complex balancing equilibrium $x \in \RR^4_>$ with $\psi = x^{\Yk}$ and hence $\ML \, \psi = 0$,
can equivalently be described as a positive solution of the binomial equations
\[
\begin{pmatrix}
-K_2 & K_1 & 0 & 0 & 0 \\
0 & -K_3 & K_2 & 0 & 0 \\
0 & 0 & 0 & -K_5 & K_4
\end{pmatrix}
\psi = 0 .
\]
In other words, $\psi \in \ker(\ML)$ is equivalent to $\psi \perp \ker(\ML)^\perp$ or a basis thereof.
Explicitly, we have $\psi = x^{\Yk} = ( (x_1)^\frac{1}{2} (x_2)^\frac{3}{2},\, x_3,\, (x_1)^3,\, x_1,\, x_4 )^T$
and
\begin{equation}
\label{eq:bineq}
K_1 \, x_{{3}}-K_2 \, (x_1)^\frac{1}{2}(x_2)^\frac{3}{2} = 0, \quad K_2 \, (x_1)^{3}-K_3 \, x_3  = 0, \quad K_4 \, x_4-K_5 \, x_1  = 0 .
\end{equation}

Clearly, these considerations generalize to arbitrary weakly reversible digraphs:
Based on the (strongly) connected components,
we can characterize complex balancing equilibria by $m-l$ binomial equations with tree constants as coefficients.

\begin{proposition} \label{prop:binomeq}
Let $\ML$ be the graph Laplacian of a weakly reversible digraph with positive weights
and $m$ vertices ordered within $l$ connected components,
\[
L_\lambda = (i^\lambda_\mu)_{\mu=1,\ldots,m_\lambda}
\quad \text{for } \lambda = 1,\ldots,l ,
\quad \text{where } \textstyle \sum_{\lambda = 1}^l m_\lambda = m .
\]
Let $\Yk \in \RR^{n \times m}$ and    
\[
Z_k = \{ x \in \RR^n_> \mid \ML \, x^{\Yk}=0 \}.
\]
Then, 
\begin{equation*}
Z_k = \{ x \in \RR^n_> \mid K_i \, x^{\yk^j}-K_j \, x^{\yk^i} =0 , \; (i,j) \in \mathcal{E} \}
\end{equation*}  
where
\[
\mathcal{E} = \{ (i^\lambda_\mu,i^\lambda_{\mu+1}) \mid \lambda=1,\ldots,l; \; \mu=1,\ldots,m_\lambda-1 \} .
\]
\end{proposition}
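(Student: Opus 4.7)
The plan is to reformulate the condition $\ML\,\psi = 0$ as orthogonality of $\psi$ to an explicit basis of $\ker(\ML)^\perp$, with the basis vectors chosen so that the resulting equations are exactly the binomials $K_i \psi_j - K_j \psi_i = 0$ for $(i,j)\in\mathcal{E}$. Substituting $\psi = x^{\Yk}$ at the end gives the description of $Z_k$.

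Since $G$ is weakly reversible, every connected component is a terminal strong linkage class, so $t=l$, and by the result from Section~\ref{sec:lap} we have $\ker(\ML) = \spann\{\chi^1,\dots,\chi^l\}$ with $\chi^\lambda_\mu = K_\mu > 0$ when $\mu \in L_\lambda$ and $\chi^\lambda_\mu = 0$ otherwise. In particular $\dim \ker(\ML)^\perp = m-l$. For each $(i,j)\in\mathcal{E}$, I would introduce the vector $\beta^{(i,j)}\in\RR^m$ supported on $\{i,j\}$ with $\beta^{(i,j)}_i = -K_j$ and $\beta^{(i,j)}_j = K_i$. Orthogonality $\beta^{(i,j)}\cdot \chi^{\lambda'} = 0$ is immediate: if $(i,j)$ lies in component $L_\lambda$ with $\lambda'\neq\lambda$ the supports are disjoint, and for $\lambda'=\lambda$ the inner product equals $-K_j K_i + K_i K_j = 0$.

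To finish I would verify that the $\beta^{(i,j)}$ form a basis of $\ker(\ML)^\perp$. The count is clean: $|\mathcal{E}| = \sum_{\lambda=1}^l (m_\lambda - 1) = m-l = \dim\ker(\ML)^\perp$. For linear independence, vectors coming from different components have disjoint supports, so one reduces to a single component $L_\lambda$; there, restricted to the coordinates $i^\lambda_1,\dots,i^\lambda_{m_\lambda}$ in the fixed order, the $m_\lambda-1$ vectors form an upper-bidiagonal matrix whose diagonal entries $-K_{i^\lambda_{\mu+1}}$ are all nonzero (tree constants of a strongly connected component with positive weights are positive), hence the rows are independent. Combining these, $\ML\psi=0$ is equivalent to $\beta^{(i,j)}\cdot\psi = 0$ for all $(i,j)\in\mathcal{E}$, and setting $\psi = x^{\Yk}$, i.e.\ $\psi_i = x^{\yk^i}$, yields the claimed equations.

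There is no real obstacle here; the only point that needs attention is justifying that the particular choice of pairs $\mathcal{E}$ (consecutive within each component under the fixed ordering) produces independent binomials, which is precisely what the bidiagonal structure above provides. Positivity of the tree constants under weak reversibility is essential both for the rank argument and so that the binomials $K_i \psi_j - K_j \psi_i$ are nondegenerate.
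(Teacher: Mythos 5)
Your argument is correct and follows essentially the same route the paper sketches before the proposition: build the binomial vectors with tree-constant entries as a basis of $\ker(\ML)^\perp$ (orthogonality to the tree-constant basis of $\ker(\ML)$, dimension count $m-l$, independence from positivity of the $K_\mu$ under weak reversibility), then substitute $\psi = x^{\Yk}$. Your bidiagonal-matrix justification of independence just makes explicit what the paper leaves as "clearly generalizes" from the example.
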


Note that the actual binomial equations depend on the order of the vertices within the connected components,
but the zero set does not.

\section{Binomial Equations with Real and Symbolic Exponents}
\label{sec:possol}

In this section, we collect basic facts about positive real solutions of binomial equations with real exponents.
We present the results in full generality, in particular, not restricted to complex balancing equilibria,
and emphasize algorithmic aspects.
Moreover, by reducing computations to linear algebra,
we outline the treatment of symbolic exponents. 

In an algebraic perspective, one usually considers solutions of binomial equations with integer exponents.
We refer to \cite{Dickenstein2009} for an introduction including algorithmic aspects and an extensive list of references. 
An algorithm with polynomial complexity for computing solutions with non-zero or positive coordinates of parametric binomial systems is presented in \cite{GrigorievWeber2012}.
For recent algorithmic methods for binomial equations and monomial parametrizations, see \cite{AdrovicVerschelde2013}. Toric geometry and computer algebra was introduced to the study of mass action systems in \cite{Gatermann2001,GatermannHuber2002,GatermannEiswirthSensse2005}
and further developed in \cite{CraciunDickensteinShiuSturmfels2009}.
So-called \emph{toric steady states} are solutions of binomial equations arising from polynomial dynamical systems \cite{PerezMillanDickensteinShiuConradi2012}.

In chemical reaction networks, it is natural to consider real exponents:
kinetic orders, measured by experiments, need not be integers.
Also in S-systems~\cite{Savageau1969b,Voit2013},
defined by binomial power-laws, the exponents are real numbers identified from data.
We note that binomial equations are implicit in the original works on chemical reaction networks \cite{HornJackson1972,Horn1972}. 

In the following, we consider binomial equations  
\begin{align*}
\alpha_i \, x^{a^i} - \beta_i \, x^{b^i} = 0 \quad \textrm{ for } i = 1, \ldots, r 
\end{align*}
for $x \in \RR^n_>$, where $a^i, b^i \in \RR^n$ and $\alpha_i, \beta_i \in \RR_>$. Clearly, $x$ is a solution 
iff
\[
x^{a^i-b^i} = \frac{\beta_i}{\alpha_i} \quad \textrm{for } i=1,\ldots,r .
\]
By introducing the exponent matrix $M \in \RR^{n \times r}$, whose $i$th column is the vector $a^i-b^i$, and the vectors $\alpha,\beta \in \RR_>^r$ with entries $\alpha_i$ and $\beta_i$, respectively, 
we can rewrite the above equation system as
\[
x^M = \frac{\beta}{\alpha}.
\]
More generally, we are interested for which $\gamma \in  \RR^r_>$ the equations
\[
x^M = \gamma
\]
have a positive solution.
Taking the logarithm, we obtain the equivalent linear equations
\begin{equation}
\label{eq:lineq}
M^T \ln x = \ln \gamma ,
\end{equation}
which reduces the problem to linear algebra. 

In the rest of this section, we fix a matrix $M\in \RR^{n \times r}$ and write
\begin{equation*}
Z_{M,\gamma} = \{ x \in \RR^n_> \mid x^M = \gamma \}
\end{equation*}
for the set of all positive solutions with right-hand side $\gamma \in   \RR^r_>$.

\begin{proposition} \label{prop:exist}
The following statements hold: 
\[
 Z_{M,\gamma}\neq \emptyset \quad \text{for all } \gamma \in \RR^r_> \quad \text{iff} \quad \ker(M) = \{0\}. 
\]
 If $\ker(M) \neq \{0\}$, then 
\[
Z_{M,\gamma}\neq \emptyset \quad\text{for } \gamma \in \RR^r_> \quad \text{iff} \quad \gamma^C = 1,
\]
where $C \in \RR^{r \times p}$ with $\im(C) = \ker(M)$ and $\ker(C) = \{0\}$.
\end{proposition}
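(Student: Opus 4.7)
The plan is to exploit the equivalence, already recorded as equation~\eqref{eq:lineq}, between $x^M=\gamma$ and the linear equation $M^T \ln x = \ln\gamma$. Since the exponential map is a bijection between $\RR^n$ and $\RR^n_>$, the existence of a positive solution to $x^M=\gamma$ is precisely the solvability of the linear system $M^T y = \ln\gamma$ in $y\in\RR^n$, which in turn is the condition $\ln\gamma \in \im(M^T)$. So the whole proposition reduces to a routine statement about when an inhomogeneous linear system is solvable for every right-hand side, and when it is solvable for a particular one.

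For the first equivalence, $Z_{M,\gamma}\neq\emptyset$ for all $\gamma \in \RR^r_>$ is the same as $\im(M^T) = \RR^r$, since $\ln$ maps $\RR^r_>$ onto $\RR^r$. By the fundamental relation $\im(M^T) = \ker(M)^\perp$ (viewing $M\colon \RR^r \to \RR^n$), this surjectivity is equivalent to $\ker(M) = \{0\}$. I would state this step briefly and cite only the orthogonal-complement identity.

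For the second equivalence, suppose $\ker(M)\neq\{0\}$ and fix $C\in\RR^{r\times p}$ with $\im(C)=\ker(M)$ and $\ker(C)=\{0\}$; then the columns of $C$ form a basis of $\ker(M)$. A vector $c\in\RR^r$ lies in $\ker(M)^\perp$ iff it is orthogonal to every basis vector of $\ker(M)$, that is, iff $C^T c = 0$. Setting $c=\ln\gamma$ and using $\ln(\gamma^C) = C^T \ln\gamma$ from the notation section, this becomes $\ln(\gamma^C)=0$, i.e.\ $\gamma^C = 1$. Combined with the previous paragraph, this gives $Z_{M,\gamma}\neq\emptyset$ iff $\gamma^C=1$.

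There is no real obstacle; the only point that needs a brief justification is why surjectivity of $\ln$ from $\RR^r_>$ onto $\RR^r$ lets one pass from ``solvable for every $\gamma\in\RR^r_>$'' to ``solvable for every right-hand side in $\RR^r$'', and dually why having a preimage $y$ of $\ln\gamma$ under $M^T$ yields a positive solution $x=\e^y$ of the original equation. Both directions are immediate from the definitions of $\e^{(\cdot)}$ and $x^M$ introduced in the notation paragraph, so the proof should be compact.
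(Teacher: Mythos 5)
Your proposal is correct and follows essentially the same route as the paper: taking logarithms via \eqref{eq:lineq}, reducing existence to $\ln\gamma \in \im(M^T) = \ker(M)^\perp$, and then translating orthogonality to the columns of $C$ into $\gamma^C = 1$. The only difference is that you spell out the surjectivity of $\ln$ and the passage back to $x=\e^y$, which the paper leaves implicit.
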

\begin{proof}
Using \eqref{eq:lineq}, $x^M = \gamma$ is equivalent to 
\[
\ln \gamma \in \im(M^T) = \ker(M)^\perp. 
\]
Hence, $Z_{M,\gamma}\neq \emptyset$ for all $\gamma \in \RR^r_>$ iff  $\ker(M)=\{0\}$.
If $\ker(M) \neq \{0\}$, then
\[
\ln \gamma \in \ker(M)^\perp = \im(C)^\perp
\quad \Leftrightarrow \quad
C^T \ln \gamma = 0
\quad \Leftrightarrow \quad
\gamma^C = 1. 
\]
\qed
\end{proof}

Computing an explicit positive solution $x^* \in  Z_{M,\gamma}$ (if it exists) in terms of $\gamma$ 
is equivalent to computing a particular solution for the linear equations \eqref{eq:lineq}.
For this, we use an arbitrary generalized inverse $H$ of $M^T$, that is, a matrix $H\in \RR^{n \times r}$ such that 
\[
M^T H M^T=M^T.
\]
We refer to \cite{Ben-IsraelGreville2003} for details on generalized inverses.

\begin{proposition} \label{prop:part}
Let $\gamma \in  \RR^r_>$ such that $\ln \gamma \in \im(M^T)$.  Let $H\in \RR^{n \times r}$ be a generalized inverse of $M^T$. Then,
\[
x^*=\gamma^{H^T}\in Z_{M,\gamma}.
\] 
\end{proposition}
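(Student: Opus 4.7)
The plan is to reduce the multiplicative claim $(x^*)^M = \gamma$ to the additive (linear) identity $M^T \ln x^* = \ln \gamma$ via the log/exp correspondence, then exploit the generalized-inverse defining property $M^T H M^T = M^T$.

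First, I would apply the identity $\ln(x^B) = B^T \ln x$ recorded in the notation section to $x^* = \gamma^{H^T}$, which yields
\[
\ln x^* = (H^T)^T \ln \gamma = H \ln \gamma .
\]
Consequently,
\[
M^T \ln x^* = M^T H \ln \gamma ,
\]
and the conclusion $x^* \in Z_{M,\gamma}$ is equivalent, by \eqref{eq:lineq}, to showing $M^T H \ln \gamma = \ln \gamma$.

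Next, I would use the hypothesis $\ln \gamma \in \im(M^T)$ to write $\ln \gamma = M^T w$ for some $w \in \RR^n$. Substituting and applying $M^T H M^T = M^T$ gives
\[
M^T H \ln \gamma = M^T H M^T w = M^T w = \ln \gamma ,
\]
which is exactly what is needed. Exponentiating recovers $(x^*)^M = \gamma$, so $x^* \in Z_{M,\gamma}$.

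There is no real obstacle here: the statement is essentially the observation that a generalized inverse supplies a particular solution of any consistent linear system, transported from the linearized picture back to the monomial picture. The only thing worth being careful about is bookkeeping the transposes between $H$ and $H^T$ and checking that the dimensions ($H \in \RR^{n \times r}$, $\gamma \in \RR^r_>$, $x^* \in \RR^n_>$) are consistent with the convention $x^B \in \RR^m_>$ for $B \in \RR^{n \times m}$ fixed earlier.
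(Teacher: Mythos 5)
Your proof is correct and coincides with the paper's argument: both write $\ln\gamma = M^T z$, compute $M^T\ln x^* = M^T H M^T z = M^T z = \ln\gamma$ using the defining property of the generalized inverse, and conclude via the logarithmic equivalence \eqref{eq:lineq}. The transpose bookkeeping $\ln(\gamma^{H^T}) = H\ln\gamma$ is handled correctly, so nothing is missing.
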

\begin{proof}
By assumption, $\ln \gamma = M^T z$ for some $z \in \RR^n$.
Then,
\[
M^T \ln x^* = M^T H \ln \gamma = M^T H M^T z = M^T z = \ln \gamma 
\]
and hence $x^* \in Z_{M,\gamma}$ as claimed.
\qed
\end{proof}

Given one positive solution $x^* \in Z_{M,\gamma} $,
we have a generalized monomial parametrization for the set of all positive solutions.
\begin{proposition} \label{prop:para}
Let $x^* \in Z_{M,\gamma}$. Then,
\[
Z_{M,\gamma} = \{ x^* \circ \e^v \mid v \in \im(M)^\perp \} .
\]
If $\im(M)^\perp \neq \{0\}$, then
\[
Z_{M,\gamma} = \{ x^* \circ \xi^{B^T} \mid \xi \in \RR^q_> \},
\]
where $B \in \RR^{n \times q}$ with $\im(B) = \im(M)^\perp$ and $\ker(B) = \{0\}$.
\end{proposition}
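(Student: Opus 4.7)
The strategy is to translate everything to logarithms, reducing the description of $Z_{M,\gamma}$ to the affine linear description of the solution set of $M^T \ln x = \ln \gamma$, which was already used in the proofs of Propositions~\ref{prop:exist} and~\ref{prop:part}. Once the set of log-solutions is an affine subspace (parametrized by $\ker(M^T) = \im(M)^\perp$), exponentiating component-wise will yield the first parametrization, and the bijection induced by $B$ will give the second.

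For the first equality, I would fix $x^* \in Z_{M,\gamma}$ and, given any $x \in \RR^n_>$, write $x = x^* \mal \e^v$ by setting $v = \ln x - \ln x^*$, which is well-defined since $x,x^* \in \RR^n_>$. Then $x \in Z_{M,\gamma}$ is equivalent to $M^T \ln x = \ln \gamma$, that is, $M^T \ln x^* + M^T v = \ln \gamma$. Since $x^* \in Z_{M,\gamma}$ gives $M^T \ln x^* = \ln \gamma$, this reduces to $M^T v = 0$, i.e., $v \in \ker(M^T) = \im(M)^\perp$. Conversely, for any $v \in \im(M)^\perp$, the element $x^* \mal \e^v$ solves $x^M = \gamma$ by the same calculation read in reverse. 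This establishes the first equation.

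For the second equality, assume $\im(M)^\perp \neq \{0\}$ and let $B \in \RR^{n \times q}$ with $\im(B) = \im(M)^\perp$ and $\ker(B) = \{0\}$. Then $B \colon \RR^q \to \im(M)^\perp$ is a linear bijection, so every $v \in \im(M)^\perp$ has a unique preimage $u \in \RR^q$ with $v = Bu$. Writing $\xi = \e^u \in \RR^q_>$, the identity $\ln(\xi^{B^T}) = B \ln \xi$ from the notation section gives $\e^v = \e^{B \ln \xi} = \xi^{B^T}$, so $x^* \mal \e^v = x^* \mal \xi^{B^T}$. Conversely, every $\xi \in \RR^q_>$ produces some $v = B \ln \xi \in \im(M)^\perp$ and the corresponding point $x^* \mal \xi^{B^T}$ lies in $Z_{M,\gamma}$ by the first part. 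Combined, both set inclusions follow.

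There is no real obstacle here — the content is already present in the logarithmic form of Proposition~\ref{prop:exist}. The only point requiring a little care is to state and use the conversion $\e^{B u} = (\e^u)^{B^T}$ in the convention fixed by the paper, which is what makes the second, monomial parametrization $\xi^{B^T}$ come out correctly.
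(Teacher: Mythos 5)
Your proposal is correct and follows essentially the same route as the paper: reduce to the linear system $M^T\ln x = \ln\gamma$, identify the solution set as a coset of $\ker(M^T)=\im(M)^\perp$, and then use the basis $B$ to obtain the monomial parametrization. The only cosmetic difference is that you invoke the identity $\ln(\xi^{B^T}) = B\ln\xi$ from the notation section, whereas the paper verifies $\e^{Bt}=\xi^{B^T}$ by a component-wise computation.
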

\begin{proof}
The first equality follows from \eqref{eq:lineq}:
$x \in Z_{M,\gamma}$ iff  $v = \ln x - \ln x^* \in \ker(M^T)= \im(M)^\perp$, 
that is, $x = x^* \mal \e^v$ with $v \in  \im(M)^\perp$. 

Since the columns of $B$ form a basis for $\im(M)^\perp$,
we can write $v \in \im(M)^\perp$ uniquely as $v = B \, t$ for some $t \in \RR^q$.
By introducing $\xi = \e^t \in \RR^q_>$, we obtain
\[
(\e^v)_i = \e^{v_i} = \e^{\sum_j b_{ij} t_j} = \textstyle{\prod_j \xi_j^{b_{ij}}} = \xi^{b_{i}} = (\xi^{B^T})_i ,
\]
that is, $\e^v = \xi^{B^T}$. \qed
\end{proof}

Note that the conditions for the existence of positive solutions and the parametrization of all positive solutions, respectively,
depend only on the vector subspaces $\ker(M)$ and $\im(M)^\perp=\ker(M^T)$.

Summing up, we have seen that computing positive solutions for binomial equations reduces to linear algebra
involving the exponent matrix $M$.
The matrices $C$, $H$, and $B$ from Propositions~\ref{prop:exist}, \ref{prop:part}, and \ref{prop:para}
can be computed effectively if $M\in \QQ^{n \times r}$ and $C$, $B$ can be chosen to have only integer entries. 

Moreover, the linear algebra approach to binomial equations allows to deal algorithmically with indeterminate (symbolic) exponents.
We can use computer algebra methods for matrices with symbolic entries like Turing factoring (generalized PLU decomposition) \cite{CorlessJeffrey1997} and its implementation~\cite{CorlessJeffrey2013}.
Based on these methods,
we can compute explicit monomial parametrizations with symbolic exponents for generic entries and investigate conditions for special cases.
See Section \ref{sec:comp} for an example.  

\section{Kinetic Deficiency}
\label{sec:kin}

Applying the results from the previous section,
we rewrite the binomial equations~\eqref{eq:bineq} from our example,
\begin{equation*}
 K_1 \, x_{{3}}-K_2 \, (x_1)^\frac{1}{2}(x_2)^\frac{3}{2} = 0, \quad K_2 \, (x_1)^{3}-K_3 \, x_3  = 0, \quad K_4 \, x_4-K_5 \, x_1  = 0,
\end{equation*}
as 
\[
x^M = \kk ,
\]
where
\begin{equation}
\label{eq:M} 
M=
\begin{pmatrix}
-\frac{1}{2} &  3 & -1 \\
-\frac{3}{2} &  0 &  0 \\
           1 & -1 &  0 \\
           0 &  0 &  1
\end{pmatrix}
\end{equation}
and
\[
\kk = (K_2/K_1,K_3/K_2,K_5/K_4)^T ,
\]
which depends on the weights $k$ via the tree constants $K$.

Recall that the binomial equations depend on the basis vectors for $\ker(\ML)^\perp$
which are determined by the relation $\mathcal{E} = \{ (1,2),(2,3),(4,5) \}$.
To specify the resulting exponent matrix $M$ and the right-hand side $\kk$,
we have fixed an order on the relation.
By abuse of notation, we write
\[
\mathcal{E} = ( (1,2),(2,3),(4,5) ) .
\]
Hence, $M = \Yk \, I_\mathcal{E}$ with
\begin{equation} 
I_\mathcal{E}=
\begin{pmatrix}
-1 &  0 &  0 \\ 
 1 & -1 &  0 \\
 0 &  1 &  0 \\
 0 &  0 & -1 \\
 0 &  0 &  1 
\end{pmatrix}.
\end{equation}

In general, for a weakly reversible digraph with $m$ vertices and $l$ connected components,
let $\mathcal{E}$ be a relation as in Proposition \ref{prop:binomeq} with fixed order.
We denote by $I_\mathcal{E} \in \RR^{m \times (m-l)}$ the matrix with columns
\[
e^j-e^i \quad\text{for } (i,j) \in \mathcal{E},
\]
where $e^i$ denotes the $i$th standard basis vector in $\RR^m$.
Clearly, the columns of $I_\mathcal{E}$ are linearly independent and hence $\dim \im(I_\mathcal{E}) = m-l$.
To rewrite the binomial equations in Proposition~$\ref{prop:binomeq}$,
we define the exponent matrix $M \in \RR^{n \times (m-l)}$ as
\[
M=\Yk \, I_\mathcal{E} ,
\]
the right-hand side $\kk \in \RR^{m-l}_>$ as  
\begin{equation} \label{eq:kappa}
(\kk)_{(i,j)} = K_j/K_i  \quad\text{for } (i,j) \in \mathcal{E} ,
\end{equation}
and obtain
\[
Z_k = \{x \in \RR^n_> \mid x^M = \kk \} .
\]
We note that the actual matrix $M$ depends on $\mathcal{E}$, but $\im(M)$ does not.
This can be seen using the following fact.

\begin{proposition}
Let $G=(V,E)$ be a digraph with $m$ vertices and $l$ connected components.
Let $\Mi\in \RR^{m \times |E|}$ denote its incidence matrix (for fixed order on $E$),
and let $I_\mathcal{E} \in \RR^{m \times (m-l)}$ be as defined above.
Then, 
\[
\im(I_\mathcal{E}) = \im(\Mi) .
\]
\end{proposition}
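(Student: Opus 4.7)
My plan is to establish the inclusion $\im(I_\mathcal{E}) \subseteq \im(\Mi)$ by a telescoping argument along undirected paths in $G$, and then to upgrade this inclusion to an equality via a dimension count, relying on the classical rank formula $\rank(\Mi) = m - l$ for an incidence matrix.

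For the inclusion, fix a column $e^{i^\lambda_{\mu+1}} - e^{i^\lambda_\mu}$ of $I_\mathcal{E}$. Its two endpoints lie in the same connected component $L_\lambda$, so there is a walk $i^\lambda_\mu = v_0, v_1, \ldots, v_p = i^\lambda_{\mu+1}$ in $G$ such that for each $s$ either $(v_s, v_{s+1}) \in E$ or $(v_{s+1}, v_s) \in E$. In both cases $\pm(e^{v_{s+1}} - e^{v_s})$ appears as a column of $\Mi$, so the telescoping sum
$$e^{i^\lambda_{\mu+1}} - e^{i^\lambda_\mu} = \sum_{s=0}^{p-1} (e^{v_{s+1}} - e^{v_s})$$
witnesses membership in $\im(\Mi)$. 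Since this holds for every column of $I_\mathcal{E}$, the inclusion $\im(I_\mathcal{E}) \subseteq \im(\Mi)$ follows.

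To close the argument, I would use that the $m-l$ columns of $I_\mathcal{E}$ are linearly independent (as already recorded in the text), so $\dim \im(I_\mathcal{E}) = m-l$. On the other side, $\rank(\Mi) = m - l$ is the standard rank formula for a directed incidence matrix: $\Mi^T x = 0$ forces $x_i = x_j$ for every $i \to j \in E$, and this equality propagates throughout each connected component, giving $\dim \ker(\Mi^T) = l$ and hence $\rank(\Mi) = m - l$. Both subspaces therefore have the same dimension, and the inclusion becomes an equality. There is no real obstacle here; the only non-routine ingredient is this rank formula, which is a textbook fact about incidence matrices, and everything else is straightforward telescoping.
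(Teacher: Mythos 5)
Your proof is correct, and it is the mirror image of the paper's argument rather than a copy of it. Both proofs share the same skeleton: the two subspaces have the same dimension $m-l$, so a single inclusion suffices, and that inclusion is obtained by a telescoping decomposition. But you prove the opposite inclusion: the paper shows $\im(\Mi) \subseteq \im(I_\mathcal{E})$ by writing each edge column $e^j - e^i$ of $\Mi$ as the telescoping sum of the consecutive-difference columns $e^{i^\lambda_{\mu+1}} - e^{i^\lambda_\mu}$ of $I_\mathcal{E}$ along the fixed ordering of the component containing $i$ and $j$, and it cites graph theory (Jungnickel) for the fact $\rank(\Mi) = m-l$; you instead show $\im(I_\mathcal{E}) \subseteq \im(\Mi)$ by telescoping each column of $I_\mathcal{E}$ along an undirected walk in $G$ (handling orientation with the sign $\pm$, which is harmless since $\im(\Mi)$ is a subspace), and you make the rank formula self-contained via $\dim\ker(\Mi^T) = l$ (vectors constant on each connected component). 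The trade-off: the paper's inclusion is purely combinatorial bookkeeping on the ordered set $\mathcal{E}$ and delegates the incidence-matrix rank to a textbook reference, while your version needs no external citation at all, at the modest price of invoking the existence of connecting walks, which is just the definition of a connected component; for the downstream use $\im(\Yk I_\mathcal{E}) = \im(\Yk \Mi) = \tilde{S}$ only the equality matters, so either direction serves equally well.
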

\begin{proof}
From graph theory (see for example~\cite{Jungnickel2013}) and the argument above,
we know that $\dim \im(\Mi) = \dim \im(I_\mathcal{E}) = m-l$.
It remains to show that $\im(\Mi) \subseteq \im(I_\mathcal{E})$.
We consider the column $e^j-e^i$ of $\Mi$ corresponding to the edge $(i,j) \in E$.
Clearly, $i$ and $j$ are in the same connected component $L_\lambda$,
in particular, $i=i^\lambda_{\mu(i)}$ and $j=i^\lambda_{\mu(j)}$, where we assume $\mu(i) < \mu(j)$.
Then,
\[
e^j-e^i = \sum_{\mu=\mu(i),\ldots,\mu(j)-1} e^{i^\lambda_{\mu+1}} - e^{i^\lambda_{\mu}} ,
\]
where $e^{i^\lambda_{\mu+1}} - e^{i^\lambda_{\mu}}$ are columns of $I_\mathcal{E}$
corresponding to pairs $(i^\lambda_{\mu},i^\lambda_{\mu+1})$ in $\mathcal{E}$.
\qed
\end{proof}

Now, we see that $\im(M)$ equals the kinetic-order subspace $\tilde{S}$:
\[
\im(M) = \im (\Yk  I_\mathcal{E}) = \im(\Yk \Mi) = \tilde{S} .
\]

Finally,
we recall that the number of independent conditions on $\kk$
for the existence of a positive solution of $x^M=\kk$ is given by $\dim \ker(M)$, cf.~Proposition~\ref{prop:exist}.
Observing $M \in \RR^{n \times (m-l)}$, we obtain
\begin{equation}
\label{eq:kerM}
\dim \ker(M)= m-l - \dim \im(M) = m-l - \dim \tilde{S} .
\end{equation}

Hence, for a digraph with $m$ vertices and $l$ connected components, 
we define the {\em kinetic deficiency} as
\[
\tilde{\delta} = m-l - \tilde{s},
\]
where $\tilde{s} = \dim \tilde{S}$ denotes the dimension of the kinetic-order subspace.

\section{Computing Complex Balancing Equilibria}
\label{sec:comp}

Combining the results from the previous sections,
we obtain the following constructive characterization of complex balancing equilibria
in terms of quotients of tree constants. 

\begin{theorem} \label{thm}

Let $\ML$ be the graph Laplacian of a weakly reversible digraph with positive weights,
$m$ vertices, and $l$ connected components.
Let $\Yk \in \RR^{n \times m}$ be the matrix of kinetic complexes, $\tilde{s} = \dim \tilde{S}$ the dimension of the kinetic-order subspace,
and $\tilde{\delta} = m-l - \tilde{s}$ the kinetic deficiency.
Further, let $M\in \RR^{n \times (m-l)}$ and $\kk \in \RR_>^{m-l}$ such that 
\[
Z_k = \{ x \in \RR^n_> \mid \ML \, x^{\Yk}=0 \} = \{x \in \RR^n_> \mid x^M = \kk \}.
\]
Then, the following statements hold:

\begin{enumerate}
\item[{\rm(a)}] $Z_k\neq \emptyset$ for all $k$ iff $\tilde{\delta}=0$.
\smallskip
\item[{\rm(b)}] If $\tilde{\delta}>0$, then 
\[
Z_k\neq \emptyset\quad \text{iff}\quad (\kk)^C = 1, 
\]
where $C \in \RR^{(m-l) \times \tilde \delta}$ with $\im(C) = \ker(M)$ and $\ker(C) = \{0\}$.
\smallskip
\item[{\rm(c)}] If $Z_k \neq \emptyset$, then
\[
x^*=(\kk)^{H^T}\in Z_k,
\] 
where $H\in \RR^{n \times (m-l) }$ is a generalized inverse of $M^T$. 
\smallskip
\item[{\rm(d)}] If $x^*\in Z_k$ and $\tilde{s}<n$, then
\[
Z_k = \{ x^* \circ \xi^{B^T} \mid \xi \in \RR^{n-\tilde s}_> \},
\]
where $B \in \RR^{n \times (n-\tilde{s})}$ with $\im(B) =\tilde{S}^\perp$ and $\ker(B) = \{0\}$.
\end{enumerate}
\end{theorem}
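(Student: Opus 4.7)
The plan is to derive all four parts from the binomial-equation reformulation $Z_k = \{x \in \RR^n_> \mid x^M = \kk\}$ developed in Section~\ref{sec:kin}, by applying Propositions~\ref{prop:exist}, \ref{prop:part}, and~\ref{prop:para} to $\gamma = \kk$. The dictionary is already essentially worked out in that section: $\tilde{\delta} = \dim \ker(M)$ by~\eqref{eq:kerM}, and $\im(M) = \tilde{S}$ so that $\im(M)^\perp = \tilde{S}^\perp$ has dimension $n - \tilde{s}$.

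For~(a), the ``if'' direction is the first half of Proposition~\ref{prop:exist}: $\tilde{\delta} = 0$ gives $\ker(M) = \{0\}$, so $Z_{M,\gamma} \neq \emptyset$ for every $\gamma \in \RR^{m-l}_>$, hence for every $\kk$ arising from some $k$. Part~(b) is the second half of that proposition read off verbatim. Part~(c) is Proposition~\ref{prop:part} applied once one notes that $Z_k \neq \emptyset$ is equivalent to $\ln \kk \in \im(M^T)$. Part~(d) is Proposition~\ref{prop:para} combined with $\im(M)^\perp = \tilde{S}^\perp$, so that the parameter $\xi$ ranges over $\RR^{n-\tilde{s}}_>$ as claimed.

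The one part requiring extra input is the ``only if'' direction of~(a): from $\tilde{\delta} > 0$ I must exhibit rate constants $k$ with $Z_k = \emptyset$, equivalently, by the second half of Proposition~\ref{prop:exist}, with $(\kk)^C \neq 1$. Since the underlying digraph is weakly reversible, every vertex $\mu$ admits at least one directed spanning tree rooted at $\mu$, so each tree constant $K_\mu$ is a nonzero positive polynomial in the $k_{ij}$, and each coordinate of $\kk$ is a nonconstant positive rational function of $k$. The image of $k \mapsto \kk$ is therefore Zariski-dense in $\RR^{m-l}_>$, while $\{\gamma \mid \gamma^C = 1\}$ is a proper subvariety of codimension $\tilde{\delta} > 0$ because $\ker(C) = \{0\}$. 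Hence generic $k$ satisfy $(\kk)^C \neq 1$, which is the required witness. This density step is the only nontrivial point in the proof; the remainder is a direct translation.
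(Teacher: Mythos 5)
Your treatment of (b), (c), (d) and the ``if'' half of (a) coincides with the paper's own argument: everything is read off from Propositions~\ref{prop:exist}, \ref{prop:part}, and \ref{prop:para} together with the identifications $\tilde{\delta}=\dim\ker(M)$ from \eqref{eq:kerM} and $\im(M)=\tilde{S}$, $\im(M)^\perp=\tilde{S}^\perp$. The divergence, and the problem, is the ``only if'' half of (a).

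Your genericity argument has two gaps. First, the inference ``each coordinate of $\kk$ is a nonconstant positive rational function of $k$, therefore the image of $k\mapsto\kk$ is Zariski-dense in $\RR^{m-l}_>$'' is a non sequitur: coordinatewise nonconstancy says nothing about the joint image, which could lie in a proper subvariety (compare $k\mapsto(f(k),f(k))$ with $f$ nonconstant). You must exclude relations among the quotients $K_j/K_i$, $(i,j)\in\mathcal{E}$, and that is precisely what the paper supplies as Lemma~\ref{lem}: for arbitrary $\psi\in\RR^m_>$ one rescales $k^*_{ij}=k_{ij}K_i/\psi_i$, so that $A_{k^*}\psi=0$ and the quotients of tree constants for $k^*$ equal $\psi_j/\psi_i$; hence $k\mapsto\kk$ is in fact surjective onto $\RR^{m-l}_>$, and Proposition~\ref{prop:exist} then gives $\tilde{\delta}=\dim\ker(M)=0$. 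Second, even granting Zariski density, your conclusion does not follow at the stated level of generality: $\Yk\in\RR^{n\times m}$, so $C$ may have irrational entries, and $\{\gamma\in\RR^{m-l}_>\mid\gamma^C=1\}$ is then not an algebraic subvariety but the exponential image of a proper linear subspace; a Zariski-dense set can be contained in such a transcendental hypersurface (the curve $\gamma_2=\gamma_1^{c}$ with $c$ irrational is Zariski dense in $\RR^2_>$). One could patch this by noting that the image of $k\mapsto\kk$ is semialgebraic (Tarski--Seidenberg), so Zariski density would give full dimension and nonempty Euclidean interior, which a measure-zero set cannot contain; but the density itself still requires an argument, and the paper's scaling lemma settles the whole issue by proving exact surjectivity.
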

\begin{proof}
By Propositions~\ref{prop:exist}, \ref{prop:part}, and \ref{prop:para}.
In fact, it remains to prove one implication in~(a).
Assume $Z_k\neq \emptyset$ for all $k$,
that is, there exists a solution to $x^M = \kk$ for all $k$.
By Lemma~\ref{lem} below, for all $\gamma \in \RR^{m-l}_>$, there exists $k$ such that $\kk = \gamma$.
Hence, there exists a solution to $x^M = \gamma$ for all $\gamma$.
Using \eqref{eq:kerM} and Proposition~\ref{prop:exist}, we obtain $\tilde{\delta} = \dim \ker(M) = 0$.
\qed
\end{proof}

\begin{lemma} \label{lem}
Let $\ML$ be the graph Laplacian of a weakly reversible digraph with positive weights,
$m$ vertices, and $l$ connected components,
and let $\kk \in \RR^{m-l}_>$ be the vector of quotients of tree constants defined in~\eqref{eq:kappa}.
For all $\gamma \in \RR^{m-l}_>$, there exists $k$ such that $\kk = \gamma$.
\end{lemma}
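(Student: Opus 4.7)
The plan is to reduce to a single strongly connected component and there to construct positive rate constants realizing any prescribed ratios. The component decoupling is immediate because $\ML$ is block-diagonal with respect to the connected components (rate constants in one component only enter $\ML$-entries within that component) and because every pair $(i,j) \in \mathcal{E}$ lies in a single component. Weak reversibility ensures each connected component is strongly connected, so it suffices to handle one strongly connected component at a time.

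Fix such a component $L_\lambda = (i^\lambda_\mu)_{\mu=1,\ldots,m_\lambda}$ and abbreviate $\gamma_\mu$ for the entry of $\gamma$ indexed by $(i^\lambda_\mu, i^\lambda_{\mu+1})$. I would first define a target positive vector $\psi \in \RR_>^{m_\lambda}$ by $\psi_1 = 1$ and $\psi_{\mu+1} = \gamma_\mu \, \psi_\mu$, so that $\psi_{\mu+1}/\psi_\mu = \gamma_\mu$ by construction. The key step is then to choose rate constants $k$ such that $\psi$ lies in the kernel of the restriction of $\ML$ to this component. Once that is achieved, Section~\ref{sec:lap} tells us that the kernel is one-dimensional and spanned by the strictly positive tree-constant vector, so the latter must be a positive scalar multiple of $\psi$; their consecutive ratios therefore coincide and $\kk = \gamma$ follows.

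My concrete choice is $k_{ij} = \alpha_{ij}/\psi_i$ for each edge $i \to j$ of the component, where $\alpha$ is a strictly positive \emph{circulation}, meaning $\sum_{j:\, j \to i} \alpha_{ji} = \sum_{j:\, i \to j} \alpha_{ij}$ at every vertex $i$. Substituting into $(\ML \, \psi)_i = \sum_{j:\, j \to i} k_{ji} \psi_j - \psi_i \sum_{j:\, i \to j} k_{ij}$, the factors $\psi_i$ and $\psi_j$ cancel against those in $k_{ij}$ and $k_{ji}$, reducing $(\ML \, \psi)_i$ to the circulation identity and hence to $0$.

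The one ingredient that still needs justification, and which I regard as the main obstacle, is the existence of such a strictly positive circulation on an arbitrary strongly connected digraph. This is a classical combinatorial fact: by strong connectivity every edge lies on at least one simple directed cycle, and the indicator vector of a simple directed cycle is itself a nonnegative circulation; summing over a family of simple directed cycles that together cover all edges yields a strictly positive circulation. Applying this construction in each component and combining the resulting rate constants gives $k$ on the full digraph with $\kk = \gamma$, completing the proof.
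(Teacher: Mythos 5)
Your proof is correct, and its overall strategy coincides with the paper's: reduce the problem to realizing an arbitrary prescribed positive vector $\psi$ in $\ker(\ML)$ (with $\psi$ chosen so that its consecutive ratios within each component are the prescribed entries of $\gamma$), and then read off $\kk=\gamma$ from the fact that, on each strongly connected component, the kernel is one-dimensional and spanned by the strictly positive tree-constant vector. Where you differ is in how the realizing weights are produced. The paper starts from arbitrary weights $k$, notes that the tree-constant vector $K$ satisfies $\ML K=0$, and simply rescales, $k^*_{ij}=k_{ij}K_i/\psi_i$, so that $A_{k^*}\psi=0$; this reuses the matrix-tree machinery already established in Section~\ref{sec:lap} and needs no extra combinatorics. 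You instead set $k_{ij}=\alpha_{ij}/\psi_i$ for a strictly positive circulation $\alpha$, whose existence you prove from scratch via a cover of the edge set by simple directed cycles (available by strong connectivity). The two constructions are really the same mechanism in different clothing: the paper's choice corresponds to the particular circulation $\alpha_{ij}=k_{ij}K_i$, whose circulation property is exactly the statement $\ML K=0$, whereas yours is built combinatorially and is self-contained modulo the classical cycle-cover fact. Your route is slightly longer (it re-derives a positive kernel element in circulation form rather than quoting the tree constants), but it makes the flow-conservation structure behind the cancellation explicit; the paper's rescaling is shorter given what Section~\ref{sec:lap} already provides. Both arguments handle the passage from one component to all components and from $\psi$-ratios to $\kk=\gamma$ in the same way, so I see no gap in your proposal.
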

\begin{proof}
First, we show that every positive vector $\psi \in \RR^m_>$ solves $\ML \, \psi = 0$ for some weights $k$.
Indeed, for given $k$, the vector of tree constants $K \in \RR^m_>$ solves $\ML \, K = 0$,
and by choosing $k^*_{ij} = k_{ij} \, \frac{K_i}{\psi_i}$, 
one obtains
\begin{align*}
(A_{k^*} \, \psi)_i
&= \sum_{j=1}^m (A_{k^*})_{ij} \, \psi_j = \sum_{j \to i \in E} k^*_{ji} \, \psi_j - \sum_{i \to j \in E} k^*_{ij} \, \psi_i \\
&= \sum_{j \to i \in E} k_{ji} \, K_j - \sum_{i \to j \in E} k_{ij} \, K_i = \sum_{j=1}^m (\ML)_{ij} \, K_j = (\ML \, K)_i = 0
\end{align*}
for all $i=1,\ldots,m$, that is, $A_{k^*} \, \psi = 0$.

Let $\mathcal{E}$ be a relation as in Proposition \ref{prop:binomeq} with the obvious order.
Using basis vectors of $\ker(\ML)$ having tree constants as entries,
we find that
\[
\frac{\psi_j}{\psi_i} = \frac{K_j}{K_i} = (\kk)_{(i,j)}
\quad \text{for all } (i,j) \in \mathcal{E} .
\]
By choosing the entries of $\psi \in \RR^m_>$ in the obvious order,
every $\gamma \in \RR^{m-l}_>$ can be attained by $\kk$ for some $k$.
\qed
\end{proof}

\begin{remark}
Theorem~\ref{thm} is constructive in the following sense: 
\begin{itemize}
\item To test if the digraph $G$ is weakly reversible,
we compute the connected and the strongly connected components and check whether they are equal. 
\item The tree constants are computed in terms of the weights $k$,
using (fraction-free) Gaussian elimination on the sub-matrices of $A_k$ determined by the (strongly) connected components.
\item Given the kinetic complexes $\Yk \in \QQ^{n \times m}$ and the (strongly) connected components of the digraph,
we compute 
a matrix $M$ and a vector $\kk$ as introduced in Section~\ref{sec:kin}.
\item All matrices involved are computed by linear algebra from the exponent matrix $M$.
This can also be done algorithmically if the kinetic complexes $\Yk$ and hence $M$ contain indeterminate (symbolic) entries; 
see the end of Section~\ref{sec:possol}.
\end{itemize}
\end{remark}

In our example, $\tilde \delta = 5-2-3=0$ and a monomial parametrization of all complex balancing equilibria is given by
\[
\left( (\ka_3)^{-1}, (\ka_1)^{-\frac{2}{3}} \, (\ka_2)^{-\frac{2}{3}} \, (\ka_3)^{-\frac{5}{3}}, \ka_2^{-1} \, (\ka_3)^{-3}, 1 \right)^T
\circ
(\xi^3, \xi^5, \xi^9, \xi^3)^T ,
\]
where 
\[
\ka \equiv \kk = \left( \frac {k_{{12}}}{k_{{21}}+k_{{23}}}, \frac {k_{{23}}}{k_{{31}}},\frac {k_{{45}}}{k_{{54}}} \right)^T
\]
and $\xi \in \RR_>$.

To conclude,
we associate with each vertex of the graph
a kinetic complex possibly containing symbolic coefficients,
thereby specifying monomials with symbolic exponents:
\begin{equation} \label{eq:symb}
\xymatrix{
a \ce{A} + b \ce{B} \ar@<0.5ex>[r] &  \ce{C} \ar@<0.5ex>[l] \ar[d] & \ce{A} \ar@<0.5ex>[r] & \ce{D} \ar@<0.5ex>[l] \\
& c \ce{A} \ar[lu] 
}
\end{equation}

In this setting, a monomial parametrization with symbolic exponents of all complex balancing equilibria is given by
\[
\left( (\ka_3)^{-1}, (\ka_1)^{-\frac{1}{b}} \, (\ka_2)^{-\frac{1}{b}} \, (\ka_3)^\frac{a-c}{b}, (\ka_2)^{-1} \, (\ka_3)^{-c}, 1 \right)^T
\circ
(\xi^{b}, {\xi}^{c-a}, {\xi}^{bc}, {\xi}^{b} )^T ,
\]
which is valid for non-zero $a,b,c\in \RR$.

\section{Generalized Birch's Theorem}

Since the dynamics of generalized mass-action systems is confined to cosets of the stoichiometric subspace,
we are interested in uniqueness and existence of complex balancing equilibria
in every positive stoichiometric compatibility class.

Let $G_k$ be a weakly reversible digraph with positive weights, $m$ vertices and $l$ connected components.
For fixed rate constants $k$,
a complex balancing equilibrium $x^* \in \RR^n_>$ of the mass-action system $(G_k,\ys,\yk)$ solves $\ML \, x^{\Yk} = 0$,
where $\ML \in \RR^{m \times m}$ is the graph Laplacian
and $\Yk \in \RR^{n \times m}$ is the matrix of kinetic complexes.
Equivalently,
it solves $x^M = \kk$,
where the columns of $M \in \RR^{n \times (m-l)}$ are differences of kinetic complexes
and the entries of $\kk \in \RR^{m-l}_>$ are quotients of the tree constants $K$, which depend on the weights $k$.
In other words,
\begin{align*}
Z_k &= \{ x \in \RR^n_> \mid \ML \, x^{\Yk} = 0 \} \\
&= \{ x \in \RR^n_> \mid x^M = \kk \} .
\end{align*}
Given a complex balancing equilibrium $x^* \in \RR^n_>$,
we further know that
\begin{align*}
Z_k &= \{ x^* \circ \e^v \mid v \in \im(M)^\perp \} \\
&= \{ x^* \circ \xi^{B^T} \mid \xi \in \RR^{\tilde{d}}_> \} ,
\end{align*}
where the second equality holds if $\im(M)^\perp \neq \{0\}$
and $B \in \RR^{n \times \tilde{d}}$ is defined as $\im(B) = \im(M)^\perp$ and $\ker(B) = \{0\}$.

For simplicity,
we write $\tilde{\mat} = B^T \in \RR^{\tilde{d} \times n}$
such that $\tilde{S} = \im(M) = \im(B)^\perp = \im(\tilde{\mat}^T)^\perp = \ker(\tilde{\mat})$.
Analogously, 
we introduce a matrix $\mat \in \RR^{d \times n}$ with full rank $d$ such that $S = \ker(\mat)$.

If the intersection of the set of complex balancing equilibria with some compatibility class,
\[
Z_k \cap (x'+S) ,
\]
is non-empty,
then there exist $\xi \in \RR^{\tilde{d}}_>$ and $u \in S$ such that
\[
x^* \mal \xi^{\tilde{\mat}} = x' + u .
\]
Multiplication by $\mat$ yields
\[
\mat \, (x^* \mal \xi^{\tilde{\mat}}) = \mat \, x' 
\]
such that
existence and uniqueness of complex balancing equilibria in every stoichiometric compatibility class
are equivalent to surjectivity and injectivity of the generalized polynomial map 
\begin{align} \label{eq:gpm}
f_{x^*} \colon & \RR^{\tilde{d}}_> \to C^\circ \subseteq \RR^d \\
& \xi \mapsto \mat \, (x^* \mal \xi^{\tilde{\mat}})
= \sum_{i=1}^n x^*_i \; \xi^{\tilde{\vct}^i} \vct^i , \nonumber
\end{align}
where $C^\circ$ is the interior of the polyhedral cone
\begin{equation*}
C = \left\{ \mat x'  \in \RR^d \mid x' \in \RR^n_\ge \right\} = \left\{ \sum_{i=1}^n x'_i \, \vct^i \in \RR^d \mid x' \in \RR^n_\ge \right\} .
\end{equation*}

In mass-action systems,
where $S=\tilde{S}$ and hence $\mat=\tilde{\mat}$,
one version~\cite{Fulton1993} of Birch's theorem~\cite{Birch1963} states
that $f_{x^*}$ is a real analytic isomorphism of $\RR^d_>$ onto $C^\circ$ for all $x^* \in \RR^n_>$.
We refer to~\cite[Sect.~5]{GopalkrishnanMillerShiu2013} for a recent overview on the use of Birch's theorem in CRNT
and to \cite{PachterSturmfels2005} for the version used in algebraic statistics.
Interestingly, Martin W.~Birch's seminal paper on maximum likelihood methods for log-linear models
was part of a PhD thesis at the University of Glasgow that was never submitted~\cite{Fienberg1992}.

Recently, we have generalized Birch's theorem to $\mat \neq \tilde{\mat}$, cf.~\cite[Proposition~3.9]{MuellerRegensburger2012}.
To formulate the result,
we define the sign vector $\sigma(x) \in \{ -,0,+ \}^n$ of a vector $x \in \RR^n$
by applying the sign function component-wise,
and we write $\sigma(S) = \{ \sigma(x) \mid x \in S \}$ for a subset $S \subseteq \RR^n$.

\begin{theorem}
\label{thm:Birch}
Let $\mat \in \RR^{d \times n}$, $\tilde{\mat} \in \RR^{\tilde{d} \times n}$
and $S = \ker(\mat)$, $\tilde{S} = \ker(\tilde{\mat})$.
If $\sigma(S)=\sigma(\tilde{S})$ and $(+,\ldots,+)^T \in \sigma(S^\perp)$,
then the generalized polynomial map $f_{x^*}$ in \eqref{eq:gpm}
is a real analytic isomorphism of $\RR^{\tilde{d}}_>$ onto $C^\circ$ for all $x^* \in \RR^n_>$.
\end{theorem}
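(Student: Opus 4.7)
The plan is to establish three properties of $f_{x^*}$: real analyticity, injectivity, and surjectivity onto $C^\circ$. Real analyticity is immediate from the defining formula, so the content of the theorem is bijectivity. The strategy is to prove injectivity directly from the sign-vector hypothesis, then establish that the Jacobian is nonsingular everywhere (so the image is open in $\RR^d$ and $f_{x^*}$ is a local diffeomorphism), and finally derive surjectivity via a properness argument.

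For injectivity, suppose $f_{x^*}(\xi^1) = f_{x^*}(\xi^2)$ for some $\xi^1, \xi^2 \in \RR^{\tilde{d}}_>$ and set $v = x^* \mal \big((\xi^1)^{\tilde{\mat}} - (\xi^2)^{\tilde{\mat}}\big) \in \RR^n$. Then $\mat\, v = 0$, so $v \in S$. Componentwise $v_i = x_i^* \big(\e^{\tilde{\vct}^i \cdot \ln \xi^1} - \e^{\tilde{\vct}^i \cdot \ln \xi^2}\big)$, and since $x^* > 0$ and the exponential is strictly monotone,
\[
\sigma(v) = \sigma\!\big(\tilde{\mat}^T(\ln \xi^1 - \ln \xi^2)\big) \in \sigma(\im \tilde{\mat}^T) = \sigma(\tilde{S}^\perp) .
\]
Combined with $\sigma(v) \in \sigma(S) = \sigma(\tilde{S})$, this gives $\sigma(v) \in \sigma(\tilde{S}) \cap \sigma(\tilde{S}^\perp) = \{0\}$ by the standard orthogonality between the sign vectors of a subspace and of its orthogonal complement. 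Hence $v = 0$, i.e., $\tilde{\mat}^T \ln \xi^1 = \tilde{\mat}^T \ln \xi^2$; full row rank of $\tilde{\mat}$ then gives $\xi^1 = \xi^2$.

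Next comes the Jacobian. In logarithmic coordinates $\xi = \e^y$ one finds $Df_{x^*} = \mat\, D\, \tilde{\mat}^T$ with $D = \diag\!\big(x_i^* \xi^{\tilde{\vct}^i}\big)$ a positive diagonal matrix. The hypothesis $\sigma(S) = \sigma(\tilde{S})$ forces $\dim S = \dim \tilde{S}$ (dimension is a combinatorial invariant of the sign-vector set), so $d = \tilde{d}$ and the Jacobian is square. Dualizing, the Pl\"ucker coordinates of $S^\perp = \im \mat^T$ and $\tilde{S}^\perp = \im \tilde{\mat}^T$ agree up to a global sign, so corresponding $d\times d$ minors of $\mat$ and $\tilde{\mat}$ share that common sign. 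The Cauchy--Binet expansion
\[
\det(\mat\, D\, \tilde{\mat}^T) = \sum_{|I|=d} \det(\mat_I)\,\det(\tilde{\mat}_I) \prod_{i \in I} D_{ii}
\]
is then a sum of terms of a single sign, not all zero (since $\mat$ has full rank $d$), so the Jacobian is nonsingular for every positive diagonal $D$. Thus $f_{x^*}$ is everywhere a local real analytic diffeomorphism; combined with injectivity and the fact that $f_{x^*}$ maps into $C$, it is a real analytic embedding of $\RR^{\tilde{d}}_>$ onto an open subset of $C^\circ$.

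Surjectivity then reduces to showing $f_{x^*}$ is a proper map into $C^\circ$; openness plus relative closedness in the connected set $C^\circ$ forces the image to be all of $C^\circ$. The hypothesis $(+,\ldots,+)^T \in \sigma(S^\perp) = \sigma(\im \mat^T)$ yields $p = \mat^T q \in \RR^n_>$ for some $q \in \RR^d$, so $q \cdot f_{x^*}(\xi) = \sum_i x_i^* p_i \, \xi^{\tilde{\vct}^i}$ is a strictly positive combination of monomials in $\xi$ and controls $f_{x^*}$ along the distinguished direction $q$. Along any sequence $\xi^{(k)}$ escaping every compactum of $\RR^{\tilde{d}}_>$, the full row rank of $\tilde{\mat}$ forces $|\tilde{\vct}^i \cdot \ln \xi^{(k)}|$ to be unbounded for at least one $i$, so some monomial $\xi^{\tilde{\vct}^i}$ tends to $0$ or $\infty$. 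The main obstacle will be the case in which some monomials collapse to $0$ while others stay bounded: here one must invoke the sign-consistency of the maximal minors of $\mat, \tilde{\mat}$ already used for the Jacobian, so that any accumulation point of $f_{x^*}(\xi^{(k)})$ lies on $\partial C$ rather than in $C^\circ$, completing properness and hence surjectivity.
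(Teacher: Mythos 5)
Your overall architecture (injectivity from sign vectors, everywhere-nonsingular Jacobian via Cauchy--Binet, surjectivity via properness and an open-closed argument in the connected set $C^\circ$) is a legitimate route; note that the paper itself gives no proof of Theorem~\ref{thm:Birch} but imports it from \cite[Proposition~3.9]{MuellerRegensburger2012}, so your argument must stand on its own. The injectivity step is correct and complete: $v \in S$, $\sigma(v) \in \sigma(\tilde{S}^\perp)$, and $\sigma(S)=\sigma(\tilde{S})$ together with $\sigma(\tilde{S}) \cap \sigma(\tilde{S}^\perp)=\{0\}$ force $v=0$. The Jacobian step is essentially right, with two small repairs: you use the full-row-rank of $\mat$ and $\tilde{\mat}$, which is only implicit in the statement (it is supplied by the surrounding text, where $\mat$ has full rank $d$ and $\tilde{\mat}=B^T$ with $\ker(B)=\{0\}$); and ``not all terms zero'' in the Cauchy--Binet sum needs a common index set $I$ with $\det(\mat_I)\neq 0$ \emph{and} $\det(\tilde{\mat}_I)\neq 0$ --- full rank of $\mat$ alone gives only the first, but equality of the underlying matroids (a consequence of $\sigma(S)=\sigma(\tilde{S})$) gives both.

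The genuine gap is the properness step, which is the heart of surjectivity and which you yourself flag as ``the main obstacle'' without closing it; moreover, the tool you point to (sign-consistency of maximal minors) is not what does the work there. What is needed is the following covector-transfer argument. Suppose $\|\ln \xi^{(k)}\| \to \infty$ while $f_{x^*}(\xi^{(k)}) \to z \in C^\circ$. Since $p=\mat^T q \in \RR^n_>$, boundedness of $q \cdot f_{x^*}(\xi^{(k)})$ bounds every monomial (this also disposes of the case where some monomial tends to $\infty$). Passing to a subsequence with $\ln\xi^{(k)}/\|\ln\xi^{(k)}\| \to u \neq 0$, boundedness forces $\tilde{\mat}^T u \le 0$, and full row rank gives $\tilde{\mat}^T u \neq 0$; so $\tau = \sigma(\tilde{\mat}^T u) \in \sigma(\tilde{S}^\perp)$ is nonzero and nonpositive, and the limit monomial vector $c$ satisfies $c_i=0$ whenever $\tau_i=-$. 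The decisive step is to transfer $\tau$ to $S^\perp$: by oriented-matroid duality, $\sigma(S)=\sigma(\tilde{S})$ implies $\sigma(S^\perp)=\sigma(\tilde{S}^\perp)$, so there is $\mu = \mat^T\lambda \in S^\perp$ with $\sigma(\mu)=\tau$. Then $\lambda \cdot z' = \mu \cdot x' \le 0$ for every $z'=\mat x'$ with $x' \in \RR^n_\ge$, hence $\lambda \cdot z' < 0$ on $C^\circ$ (as $\lambda \neq 0$ and $C$ is full-dimensional), whereas $\lambda \cdot z = \sum_i x^*_i c_i \mu_i = 0$ because each term has $c_i=0$ or $\mu_i=0$ --- contradicting $z \in C^\circ$. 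Without this argument (or an equivalent), your assertion that accumulation points of escaping sequences lie on $\partial C$ is unsubstantiated; with it, your local-diffeomorphism-plus-open-closed reasoning does complete the proof.
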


If $\tilde{\delta} = 0$, there exists a complex balancing equilibrium for all rate constants $k$, by Theorem~\ref{thm}.
If further the generalized polynomial map $f_{x^*}$ is surjective and injective for all $x^*$,
then, by Theorem~\ref{thm:Birch}, there exists a unique steady state in every positive stoichiometric compatibility class for all $k$.

\bigskip

To illustrate the result, we consider the minimal (weakly) reversible weighted digraph
\[
1 \underset{k_{21}}{\overset{k_{12}}{\rightleftarrows}} 2 ,
\]
and associate with each vertex a (stoichiometric) complex
\[
\ce{A} + \ce{B} \rightleftarrows \ce{C} 
\]
as well as a kinetic complex
\[
a \ce{A} + b \ce{B} \rightleftarrows \ce{C} ,
\]
where $a,b > 0$.
We find $S=\im (-1, -1, 1)^T$ and $\tilde{S} = \im (-a, -b , 1)^T$ and choose
\[
\mat=\begin{pmatrix} 1 &  0 & 1 \\ 0 & 1 & 1\end{pmatrix}
\quad \text{and} \quad 
\tilde{\mat}=\begin{pmatrix} 1 & 0 & a \\ 0 & 1 & b \end{pmatrix}
\]
such that $S = \ker(\mat)$ and $\tilde{S} = \ker(\tilde{\mat})$.
Clearly, our generalization of Birch's theorem applies since
\[
\sigma(S) = \left\{
\begin{pmatrix}-\\-\\+\end{pmatrix} ,
\begin{pmatrix}+\\+\\-\end{pmatrix} ,
\begin{pmatrix}0\\0\\0\end{pmatrix}
\right\}
=\sigma(\tilde{S})
\]
and $(1,1,2)^T \in S^\perp$.
Hence,
there exists a unique solution $\xi \in \RR^2_>$ for the system of generalized polynomial equations
\[
x^*_1\;\xi_1
\begin{pmatrix}
1 \\ 0
\end{pmatrix}
+
x^*_2\;\xi_2
\begin{pmatrix}
0 \\ 1
\end{pmatrix}
+
x^*_3\;(\xi_1)^a\,(\xi_2)^b
\begin{pmatrix}
1 \\ 1
\end{pmatrix}
=
\begin{pmatrix}
y_1 \\ y_2
\end{pmatrix}
\]
for all right-hand-sides $y \in C^\circ = \RR^2_>$, all parameters $x^* \in \RR^3_>$,
and all exponents $a,b > 0$. 
Note that Birch's theorem guarantees the existence of a unique solution only for $a=b=1$.

In terms of the generalized mass-action system above, we have the following result:
Since $\tilde{\delta}=2-1-1=0$,
there exists a unique complex balancing equilibrium in every positive stoichiometric compatibility class for all $k_{12},k_{21}>0$
and all kinetic orders $a,b > 0$.
Since $\delta=2-1-1=0$, there are no other steady states.

\section{Sign Vectors and Oriented Matroids}

The characterization of surjectivity and injectivity of generalized polynomial maps involves 
sign vectors of real linear subspaces, which are basic examples of oriented matroids.
(Whereas a matroid abstracts the notion of linear independence,
an oriented matroid additionally captures orientation.) 

The theory of oriented matroids provides a common framework to study combinatorial properties of various geometric objects,
including point configurations, hyperplane arrangements, convex polyhedra, and directed graphs. 
See~\cite{BachemKern1992}, \cite[Chapters 6 and 7]{Ziegler1995}, and~\cite{Richter-GebertZiegler1997}
for an introduction and overview, and \cite{BjornerLasSturmfelsWhiteZiegler1999} for a comprehensive study.

There are several sets of sign vectors associated with a linear subspace
which satisfy the axiom systems for (co-)vectors, (co-)circuits, or chirotopes of oriented matroids.
(In fact, there are non-realizable oriented matroids that do not arise from linear subspaces.)

For algorithmic purposes, the characterization of oriented matroids in terms of basis orientations is most useful.
The chirotope of a matrix $\mat \in \RR^{d \times n}$ (with rank $d$) is defined as the map
\begin{align*}
\chi_\mat \colon \{1, \ldots , n\}^d &\to \{-, 0, +\} \\
(i_1 , \ldots , i_d ) &\mapsto \sign(\det(\vct^{i_1} , \ldots , \vct^{i_d} )) ,
\end{align*}
which records for each $d$-tuple of vectors whether it forms a positively oriented basis of $\RR^d$,
a negatively oriented basis, or not a basis. 
Hence, chirotopes can be used to test algorithmically
if the sign vectors of two subspaces are equal by comparing determinants of maximal minors.

More generally,
the realization space of matrices defining the same oriented matroid as $\mat \in \RR^{d \times n}$ (with rank $d$)
is described by the semi-algebraic set   
\begin{align*}
\mathcal{R}(\mat) = 
\{ A \in \RR^{d \times n} \mid 
& \sign(\det(a^{i_1}, \ldots , a^{i_d} )) = \\
& \sign(\det(\vct^{i_1} , \ldots , \vct^{i_d} )), \; 1\leq i_1 < \cdots < i_d \leq n \} .
\end{align*}
Mn\"{e}v's universality theorem~\cite{Mnev1988} theorem states that already for oriented matroids with rank $d=3$,
the realization space can be ``arbitrarily complicated'';
see \cite{BjornerLasSturmfelsWhiteZiegler1999} for a precise statement and \cite{Basu2006} for semi-algebraic sets and algorithms.

Concerning software, the \texttt{C++} package \href{http://www.rambau.wm.uni-bayreuth.de/TOPCOM}{TOPCOM} \cite{Rambau2002}
allows to compute efficiently chirotopes with rational arithmetic and generate all cocircuits (covectors with minimal support).
There is also an interface to the open source computer algebra system \href{http://www.sagemath.org/}{SAGE}. 

In our running example,
we have $\tilde{S} = \im(\Yk \, I_\mathcal{E}) = \im(M)$ with $M$ as in~\eqref{eq:M}.
Analogously, $S = \im (\Ys \, I_\mathcal{E}) = \im(\mathcal{N})$ with
\begin{equation} \label{eq:N}
\mathcal{N} =
\begin{pmatrix}
-1 &  2 & -1 \\ 
-1 &  0 &  0 \\
 1 & -1 &  0 \\ 
 0 &  0 &  1
\end{pmatrix} .
\end{equation}
To check the sign vector condition $\sigma(S)=\sigma(\tilde{S})$,
we compare the chirotopes of $\mathcal{N}^T$ and $M^T$.
Computing the signs of the four maximal minors of $\mathcal{N}^T$, we see that its chirotope is given by
\[
\chi_{\mathcal{N}^T}(1,2,3)=-, \quad \chi_{\mathcal{N}^T}(1,2,4)=+, \quad \chi_{\mathcal{N}^T}(1,3,4)=-, \quad \chi_{\mathcal{N}^T}(2,3,4)=+ .
\]
Analogously, we compute the chirotope of $M^T$ and verify $\chi_{\mathcal{N}^T}=\chi_{M^T}$.
Clearly, the other sign vector condition $(+,\ldots,+)^T \in \sigma(S^\perp)$ also holds, for example, $(1,1,2,1)^T\in S^\perp$. 

Since $\tilde{\delta}=0$, we know from Theorems~\ref{thm} and \ref{thm:Birch}
that there exists a unique complex balancing equilibrium
in every positive stoichiometric compatibility class for all rate constants $k$.
Moreover, since $\delta=5-2-3=0$,
we know that there are no steady states other than complex balancing equilibria for the ODE \eqref{eq:ODE}. 

In the setting of symbolic exponents~\eqref{eq:symb}, the exponent matrix amounts to
\begin{equation} \label{eq:Msymb}
M =
\begin{pmatrix}
-a &  c & -1 \\ 
-b &  0 &  0 \\ 
 1 & -1 &  0 \\ 
 0 &  0 &  1
\end{pmatrix}
\end{equation}
and the chirotope of $M^T$ (in the same order as above) is given by
\[
-\sign(b), \quad \sign(b\,c), \quad \sign(a-c), \quad \sign(b)
\]
for $a,b,c \neq 0$.
Hence, there exists a unique steady state in every positive stoichiometric compatibility class for all rate constants and all exponents with $a,b,c>0$ and $a<c$.

\section{Multistationarity}

A (generalized) chemical reaction network $(G,y,\tilde{y})$ has \emph{the capacity for multistationarity}
if there exist rate constants $k$ such that the generalized mass action system $(G_k,y,\tilde{y})$
admits more than one steady state in some stoichiometric compatibility class.

In mass-action systems,
every stoichiometric compatibility class contains at most one complex balancing equilibrium.
However, in generalized mass action systems, multiple steady states of this type are possible~\cite[Proposition~3.2]{MuellerRegensburger2012}.

\begin{proposition} \label{prop:multi}
Let $(G,y,\tilde{y})$ be a generalized chemical reaction network.
If $G$ is weakly reversible and $\sigma(S) \cap \sigma(\tilde{S}^\perp) \neq \{0\}$,
then $(G,y,\tilde{y})$
has the capacity for multiple complex balancing equilibria.
\end{proposition}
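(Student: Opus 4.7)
My plan is to construct, by hand, two distinct complex balancing equilibria in the same stoichiometric compatibility class for a suitable choice of rate constants. The main tools are the monomial parametrization of $Z_k$ in terms of $\tilde{S}^\perp$ (Proposition~\ref{prop:para} and Theorem~\ref{thm}(d)) together with the freedom, guaranteed by weak reversibility, to prescribe any positive vector as a complex balancing equilibrium by adjusting the weights (the construction inside the proof of Lemma~\ref{lem}).

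First, the sign-vector hypothesis provides nonzero $u \in S$ and $v \in \tilde{S}^\perp$ with $\sigma(u) = \sigma(v)$; in particular $u$ and $v$ have the same support, and $u_i$ carries the same sign as $v_i$ for every $i$. Using the elementary fact $\sigma(\e^{v_i} - 1) = \sigma(v_i)$, I would then define $x^1 \in \RR^n_>$ coordinate-wise by $x^1_i = u_i/(\e^{v_i}-1)$ on the common support of $u$ and $v$, and by any positive value on the complement. By construction $x^1 \mal (\e^v - \mathbf{1}) = u$, equivalently
\[
x^2 := x^1 \mal \e^v = x^1 + u .
\]
Since $v \neq 0$ we have $x^1 \neq x^2$, and since $u \in S$ the two vectors lie in a common stoichiometric compatibility class.

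Next, because $G$ is weakly reversible, applying the construction in the proof of Lemma~\ref{lem} to the positive vector $\psi = (x^1)^{\Yk} \in \RR^m_>$ yields rate constants $k$ with $\ML \, (x^1)^{\Yk} = 0$, so that $x^1 \in Z_k$. The parametrization $Z_k = \{ x^1 \mal \e^w \mid w \in \im(M)^\perp \}$ from Proposition~\ref{prop:para}, combined with $\im(M) = \tilde{S}$ (noted at the end of Section~\ref{sec:kin}), then shows $x^2 \in Z_k$ as well. Hence $Z_k \cap (x^1 + S)$ contains at least two points, which is the required capacity for multiple complex balancing equilibria.

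The crux is really the sign-matching between $u$ and $v$: the hypothesis $\sigma(S) \cap \sigma(\tilde{S}^\perp) \neq \{0\}$ is used precisely to make the coordinate-wise definition of $x^1$ produce strictly positive entries. Everything else is bookkeeping: the monomial parametrization of $Z_k$ gives the second equilibrium for free, and Lemma~\ref{lem} removes any constraint on the choice of $x^1$ by letting us tune the rate constants after the fact.
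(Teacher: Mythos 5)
Your proposal is correct, and it is essentially the argument behind this statement: the paper itself gives no proof, citing \cite[Proposition~3.2]{MuellerRegensburger2012}, and the proof there rests on exactly your construction --- picking $u \in S$, $v \in \tilde{S}^\perp$ with $\sigma(u)=\sigma(v)$, setting $x^1_i = u_i/(\e^{v_i}-1)$ on the common support so that $x^2 = x^1 \circ \e^v = x^1 + u$, and using weak reversibility to choose rate constants making $x^1$ (and hence, via the parametrization of $Z_k$ by $\tilde{S}^\perp$, also $x^2$) a complex balancing equilibrium. Your use of the construction inside Lemma~\ref{lem} and of Proposition~\ref{prop:para} with $\im(M)=\tilde{S}$ is a valid way to assemble these ingredients from the present paper's machinery.
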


Analogously,
multiple toric steady states are possible (for networks with mass-action kinetics)
if the sign vectors of two subspaces intersect non-trivially \cite{Conradi2008,PerezMillanDickensteinShiuConradi2012}.
For deficiency one networks (with mass-action kinetics),
the capacity for multistationarity
is also characterized by sign conditions \cite{Feinberg1988,Feinberg1995b}. 

For precluding multistationarity,
injectivity of the right-hand side of the dynamical system on cosets of the stoichiometric subspace is sufficient.
In~\cite{Muelleretal2013},
we characterize injectivity of generalized polynomial maps on cosets of the stoichiometric subspace in terms of sign vectors.
There, we also give a survey on injectivity criteria
and discuss algorithms to check sign vector conditions.

For the last time, we return to our example,
in particular, to the setting of symbolic kinetic complexes.
Considering the matrix $M$ in \eqref{eq:Msymb},
a matrix $B$ with $\im(B) = \im(M)^\perp = \tilde{S}^\perp$ is given by
\[
B=(b, c-a, b\,c, b)^T
\]
for $a,b,c \neq 0$.
Hence, for $a,b,c > 0$ and $a>c$, we have $(+,-,+,+)^T \in \sigma(\tilde{S}^\perp)$.

On the other hand, considering the matrix $\mathcal{N}$ in \eqref{eq:N} with $\im(\mathcal{N}) = S$, we also have $(+,-,+,+)^T \in \sigma(S)$,
and hence $\sigma(S) \cap \sigma(\tilde{S}^\perp) \neq \{0\}$.
By Proposition~\ref{prop:multi}, if the inequalities $a,b,c > 0$ and $a>c$ hold,
then there exist rate constants $k$ that admit more than one complex balancing equilibrium in some stoichiometric compatibility class.


\end{document}